\documentclass[12pt]{amsart}

\usepackage[utf8]{inputenc}
\usepackage[a4paper,nomarginpar]{geometry}

\geometry{
    lmargin=25mm,
    rmargin=25mm,
    tmargin=25mm,
    bmargin=25mm}

\usepackage[english]{babel}
\usepackage{enumitem}
\usepackage{amsmath,amsfonts,amssymb,amsthm,amscd}
\usepackage{tikz}
\usepackage{xcolor}
\usepackage{graphics}

\allowdisplaybreaks

\makeatletter
\def\subsection{\@startsection{subsection}{2}%
        \z@{.5\linespacing\@plus.7\linespacing}{.3\linespacing}%
        {\normalfont\bfseries}}
\makeatother

\theoremstyle{theorem}
\newtheorem{theorem}{Theorem}[section]

\newtheorem{corollary}[theorem]{Corollary}

\theoremstyle{definition}

\newtheorem{example}[theorem]{Example}

\theoremstyle{remark} \theoremstyle{question} \theoremstyle{example}

\def\leaderfill{\leaders\hbox to .8em{\hss .\hss}\hfill}
\def\_#1{{\lower 0.7ex\hbox{}}_{#1}}

\def\vr{{\varphi}}

\def\Hom{\operatorname{{Hom}}}

\def\Ker{\operatorname{{Ker}}}
\def\mI{\operatorname{{Im}}}


\begin{document}


\title[On exact diagrams and strict morphisms]
      {On exact diagrams and strict morphisms}




\author{Dinam\'erico P. Pombo Jr.}
\address{Instituto de Matem\'atica e Estat\'istica, Universidade Federal
Fluminense, Rua Professor Marcos Waldemar de Freitas Reis,
s/n${}^{\underline{\rm o}}$, Bloco G, Campus do Gragoat\'a,
24210-201, Niter\'oi, RJ, Brasil.}

\curraddr{}

\email{dpombojr@gmail.com}


\subjclass[2020]{46M18, 54H11, 22A05.}

\keywords{Topological groups, strict morphisms, exact diagrams,
exact sequences.}

\date{}

\dedicatory{}

\begin{abstract}
Necessary and sufficient conditions for the exactness (in the
algebraic sense) of certain sequences of continuous group
homomorphisms are established.
\end{abstract}

\maketitle


\section{Introduction}

Two fundamental theorems of Linear Algebra
\cite[p.\ 57 and p.\ 59]{Bourbaki1},
which can be extended to arbitrary abelian categories
\cite[p.\ 128]{Grothendieck1},
assert that the exactness of certain sequences of linear mappings between
modules is equivalent to the exactness of sequences of group homomorphisms
between the corresponding abelian groups of linear mappings (the concept
of an exact sequence is due to W. Hurewicz \cite{Hurewicz}). In this work
we prove the validity of analogous results in the context of topological
groups. More precisely, we show that the notion of a strict morphism is the key ingredient which allows us
to establish the equivalence between the exactness (in the algebraic sense)
of certain sequences of continuous group homomorphisms and the exactness of
diagrams of mappings between the corresponding sets of continuous group
homomorphisms. Moreover, two applications to the theory of groups and two
applications to the theory of topological groups are provided. Similar
results may be found, for example, in \cite[II, p.\ 50]{Bourbaki2} and
\cite[p.\ 23]{Palamodov}. It should also be mentioned that in this paper,
whose main purpose is the discussion of the interplay between the classical
notions of strictness and exactness, all the proofs are of an elementary
nature.


\section{Preliminaries}

In this work the identity element of any group and a group reduced to its
identity element will be denoted by $e$. For arbitrary groups $A$ and $B$,
the group homomorphism $x \in A \mapsto e \in B$ will also be denoted by
$e$\,; and, if $u\colon A \to B$ is an arbitrary group homomorphism, its
kernel (resp.\ image) will be represented by $\Ker(u)$ (resp. $\mI(u)$).
If $A$ is an arbitrary topological group, the continuous group homomorphism
$x \in A \mapsto x \in A$ will be represented by $1_A$\,. A sequence
$$
\cdots \longrightarrow A_n \overset{u_n}{\longrightarrow}
  A_{n+1} \overset{u_{n+1}}{\longrightarrow} A_{n+2} \longrightarrow \cdots
$$
of group homomorphisms is \textit{exact\/} \cite[p.\ 15]{Lang} if
$\mI(u_n) = \Ker(u_{n+1})$ for all $n$.

In what follows \textit{morphism\/} will mean continuous group homomorphism.
The set of all morphisms from a topological group $A$ into a topological
group $B$ will be denoted by $\Hom(A,B)$; $u \in \Hom(A,B)$ is a
\textit{strict morphism\/} \cite[p.\ 30]{Bourbaki0} if the bijective morphism
$\bar{u}\colon x\Ker(u) \in A/\Ker(u) \mapsto u(x) \in \mI(u)$ is a
topological group isomorphism ($A/\Ker(u)$ endowed with the quotient
topology and $\mI(u)$ endowed with the topology induced by that of $B$).
For $u \in \Hom(A,B)$ to be a strict morphism it is necessary and sufficient
that $u$ transforms open subsets of $A$ onto open subsets of $\mI(u)$.
For $u \in \Hom(A,B)$ and a topological group $D$, the mapping
$$
\vr \in \Hom(D,A) \mapsto u\circ\vr \in \Hom(D,B)\,
({\rm resp.}\, \psi \in \Hom(B,D) \mapsto \psi \circ u \in \Hom(A,D))
$$
will be denoted by $u^*$ (resp. $u_*$).

Let us recall \cite[p.\ 301]{Grothendieck2} that, if $E$, $E'$, $E''$ are sets,
a diagram
$$
E \overset{u}{\longrightarrow} E'^{\quad\overset{v_1}{\longrightarrow}}_{\quad\,\underset{v_2}{\longrightarrow}}\quad E''
$$
of mappings is {\it exact\/} if $u$ is a bijection from $E$ onto the set
$\Ker(v_1,v_2) := \{x' \in E';\, v_1(x') = v_2(x')\}$. In particular,
if $A$, $B$, $C$, $D$ are topological groups, $u \in \Hom(A,B)$,
$v \in \Hom(B,C)$, to say that the diagram

\noindent $\Hom(D,A) \overset{u^*}{\longrightarrow} \Hom(D,B)^{\quad\overset{v^*}{\longrightarrow}}_{\quad\,\underset{\bf 1}{\longrightarrow}} \quad \Hom(D,C)$ (where ${\bf 1}(\psi)=e$ for $\psi \in \Hom(D,B)$)

\noindent({\rm resp.} $\Hom(C,D) \overset{v_*}{\longrightarrow} \Hom(B,D)^{\quad\overset{u_*}{\longrightarrow}}_{\quad\,\underset{\bf 1}{\longrightarrow}} \quad \Hom(A,D)$ (where ${\bf 1}(\psi)=e$ for $\psi \in \Hom(B,D)$))

\noindent of mappings is exact is equivalent to saying that $u^*$ is a
bijection from $\Hom(D,A)$ onto $\Ker(v^*,{\bf 1})$
(resp. $v_*$ is a bijection from $\Hom(C,D)$ onto $\Ker(u_*,\bf 1)$).

If $A$, $B$ are abelian topological groups, $\Hom(A,B)$ is an abelian group
and $u^*$, $v^*$, $u_*$, $v_*$ are group homomorphisms. In this context,
the exactness of the diagram
$\Hom(D,A) \overset{u^*}{\longrightarrow} \Hom(D,B)^{\overset{v^*}{\longrightarrow}}_{\underset{\bf 1}{\longrightarrow}} \,\, \Hom(D,C)$ \,
(resp. $\Hom(C,D) \overset{v_*}{\longrightarrow}\Hom(B,D)^{\overset{u_*}{\longrightarrow}}_{\underset{\bf 1}{\longrightarrow}} \,\,\Hom(A,D))$
of mappings is equivalent to the exactness of the sequence
$$
e \longrightarrow \Hom(D,A)\overset{u^*}{\longrightarrow} \Hom(D,B)
\overset{v^*}{\longrightarrow} \Hom(D,C)
$$
$$
({\rm resp.}\, e \longrightarrow \Hom(C,D) \overset{v_*}{\longrightarrow}\Hom(B,D)\overset{u_*}{\longrightarrow}\Hom(A,D))
$$
of group homomorphisms.


\section{The results}

In contrast with what occurs in Linear Algebra one may find abelian
topological groups $A,B,u \in \Hom(A,B)$ and $v \in \Hom(B,B)$,
in such a way that the sequence
$$
e \longrightarrow A \overset{u}{\longrightarrow} B \overset{v}{\longrightarrow} B
$$
of group homomorphisms is exact, but the sequence
$$
e \longrightarrow \Hom(B,A) \overset{u^*}{\longrightarrow} \Hom(B,B) \overset{v^*}{\longrightarrow} \Hom(B,B)
$$
of group homomorphisms is not exact. In fact, let $A$ be the additive group
of real numbers endowed with the discrete topology and $B$ the additive group
of real numbers endowed with the usual topology. Let $u \in \Hom(A,B)$ be
given by $u(x) = x$ and let $v \in \Hom(B,B)$ be given by $v(x) = e$.
It is clear that $u$ is not a strict morphism and that the sequence
$$
e \longrightarrow A \overset{u}{\longrightarrow} B \overset{v}{\longrightarrow} B
$$
of group homomorphisms is exact. Nevertheless, the sequence
$$
\Hom(B,A) \overset{u^*}{\longrightarrow} \Hom(B,B) \overset{v^*}{\longrightarrow} \Hom(B,B)
$$
of group homomorphisms is not exact. For, if it were, since
$1_B \in \Ker(v^*) = \mI(u^*)$, there would exist a $\vr \in \Hom(B,A)$
so that $u^*(\vr) = u \circ \vr = 1_B$\,; but this would imply $\vr(x) = x$
for $x \in B$, which is not continuous as a mapping from $B$ into $A$.

The above-mentioned example shows that the strictness of $u$ is essential
for the validity of the implication (a) $\Rightarrow$ (b) in the following

\begin{theorem}\label{Theorem3.1}
Let $A$, $B$, $C$ be topological groups, $u \in \Hom(A,B)$ and
$v \in \Hom(B,C)$. Then the following conditions are equivalent:

\begin{itemize}

\item[\rm (a)] $u$ is a strict morphism and the sequence
  $$
  e \longrightarrow A \overset{u}{\longrightarrow} B \overset{v}{\longrightarrow} C
  $$
  of group homomorphisms is exact;

\item[\rm (b)] for each topological group $D$, the diagram
  $$
  \Hom(D,A) \overset{u^*}{\longrightarrow} \Hom(D,B)^{\,\,\overset{v^*}{\longrightarrow}}_{\,\,\,\underset{\bf 1}{\longrightarrow}}\Hom(D,C)
  $$
  of mappings is exact.

\end{itemize}
\end{theorem}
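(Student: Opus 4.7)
The proof splits into the two implications, and the substance of each is a clever choice of test object $D$. Throughout, note that $\Ker(v^*,{\bf 1}) = \{\psi \in \Hom(D,B) : v\circ\psi = e\} = \{\psi : \psi(D) \subseteq \Ker(v)\}$, so exactness of the diagram in (b) says exactly that $u^*$ is a bijection from $\Hom(D,A)$ onto the set of morphisms $D \to B$ whose image lies in $\Ker(v)$.

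\textbf{(a) $\Rightarrow$ (b).} From the exactness of the sequence, $u$ is injective and $\mI(u) = \Ker(v)$. Injectivity of $u$ gives injectivity of $u^*$ immediately; and $v \circ u = e$ gives $\mI(u^*) \subseteq \Ker(v^*,{\bf 1})$. For the reverse inclusion, suppose $\psi \in \Hom(D,B)$ satisfies $v \circ \psi = e$, so $\psi(D) \subseteq \Ker(v) = \mI(u)$. Here is where strictness is used: since $u$ is strict and injective, the canonical map $\bar u$ becomes a topological isomorphism $A \to \mI(u)$, so $u^{-1}\colon \mI(u) \to A$ is a morphism. Then $\vr := u^{-1} \circ \psi \in \Hom(D,A)$ satisfies $u^*(\vr) = \psi$.

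\textbf{(b) $\Rightarrow$ (a).} I will extract each piece of (a) by testing (b) against carefully chosen groups $D$.
\emph{Injectivity of $u$:} take $D = \Ker(u)$ with the subspace topology. The inclusion $\iota$ and the constant $e$ morphism both map to the zero element under $u^*$, so injectivity of $u^*$ (part of the bijection in (b)) forces $\Ker(u) = \{e\}$.
\emph{$\mI(u) \subseteq \Ker(v)$:} apply (b) with $D = A$ and $\vr = 1_A$; since $u^*(1_A) = u$ lies in $\Ker(v^*,{\bf 1})$, we get $v \circ u = e$.
\emph{$\Ker(v) \subseteq \mI(u)$:} for any $b \in \Ker(v)$, take $D = \bz$ with the discrete topology and $\psi \in \Hom(\bz,B)$ determined by $\psi(1) = b$. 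Then $v \circ \psi = e$, so by (b) some $\vr \in \Hom(\bz,A)$ satisfies $u(\vr(1)) = b$, placing $b$ in $\mI(u)$.

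\textbf{Strictness of $u$ (the main obstacle).} This is the most delicate step. I will take $D := \mI(u) = \Ker(v)$ endowed with the topology induced from $B$, and let $\psi \colon D \hookrightarrow B$ be the inclusion. Then $v \circ \psi = e$, so $\psi \in \Ker(v^*,{\bf 1})$, and (b) yields $\vr \in \Hom(D,A)$ with $u \circ \vr = \psi$, i.e.\ $u(\vr(x)) = x$ for every $x \in \mI(u)$. By injectivity of $u$ one also gets $\vr \circ u = 1_A$, so $\vr$ is a continuous two-sided inverse of the bijective morphism $A \to \mI(u)$ induced by $u$. Hence $\bar u$ is a topological isomorphism and $u$ is strict. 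The only subtle point is recognizing that the correct test group for strictness is $\mI(u)$ itself equipped with the subspace topology — this is what converts the abstract lifting property (b) into continuity of the set-theoretic inverse of $u$.
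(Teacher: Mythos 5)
Your proof is correct and follows essentially the same route as the paper's: the same use of strictness to invert $u$ on its image for (a) $\Rightarrow$ (b), and the same test objects $D=\Ker(u)$, $D=A$, and $D=\mI(u)$ (with the subspace topology and the inclusion morphism) for (b) $\Rightarrow$ (a). The only cosmetic difference is that you establish $\Ker(v)\subseteq\mI(u)$ by testing against discrete $\bz$ for each element of $\Ker(v)$, where the paper uses $D=\Ker(v)$ with the inclusion all at once; both work equally well.
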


\begin{proof}
(a) $\Rightarrow$ (b):\, The exactness of the sequence
$$
e \longrightarrow A \overset{u}{\longrightarrow} B
$$
is equivalent to the injectivity of $u$.

Let $D$ be an arbitrary topological group, and let us show the exactness
of the diagram
$$
\Hom(D,A) \overset{u^*}{\longrightarrow} \Hom(D,B)^{\,\,\overset{v^*}{\longrightarrow}}_{\,\,\,\underset{\bf 1}{\longrightarrow}}\Hom(D,C).
$$

It is obvious that the injectivity of $u^*$ follows from the injectivity
of $u$. Now let us prove that ${\rm Im}(u^*) = \Ker(v^*,{\bf 1})$.
In fact, since $\mI(u) \subset \Ker(v)$, it follows that
$\mI(u^*) \subset \Ker(v^*,{\bf 1})$. On the other hand,
if $\psi \in \Ker(v^*,{\bf 1})$, $v \circ \psi = e$\,, and hence
$\mI(\psi) \subset \Ker(v) = \mI(u)$. Consequently, there is a unique
mapping $\vr\colon D \to A$ such that $\psi = u \circ \vr$, and it is clear
that $\vr$ is a group homomorphism. Since $\psi$ is continuous and the
group isomorphism $u(x) \in \mI(u) \mapsto x \in A$ is continuous
(because $u$ is a strict morphism), we conclude that $\vr \in \Hom(D,A)$;
and, by construction, $\psi = u^*(\vr) \in \mI(u^*)$. Therefore
$\Ker(v^*,{\bf 1}) \subset \mI(u^*)$, and the equality
$\mI(u^*) = \Ker(v^*,{\bf 1})$ is established.

\smallskip

\noindent\, (b) $\Rightarrow$ (a):\, By taking $D = \Ker(u)$ endowed with
the topology induced by that of $A$ and $\vr\colon \Ker(u) \to A$ the
inclusion mapping, the hypothesis guarantees the injectivity of the mapping
$$
\Hom(\Ker(u),A) \overset{u^*}{\longrightarrow} \Hom(\Ker(u),B).
$$
Since $u^*(\varphi)=u^*(e)$, it follows that $\vr = e$\,, which implies the
exactness of the sequence
$$
e \longrightarrow A \overset{u}{\longrightarrow} B.
$$

Now, by taking $D = A$, the exactness of the diagram
$$
\Hom(A,A) \overset{u^*}{\longrightarrow} \Hom(A,B)^{\overset{v^*}{\longrightarrow}}_{\underset{\bf 1}{\longrightarrow}} \Hom(A,C)
$$
and the equality $(v^* \circ u^*)(1_A) = v \circ u$ imply
$\mI(u) \subset \Ker(v)$. And, by taking $D = \Ker(v)$ endowed with the
topology induced by that of $B$ and $\psi\colon \Ker(v) \to B$ the inclusion
mapping, one has $\psi \in \Ker(v^*,{\bf 1}) = \mI(u^*)$. Therefore there is
a $\vr \in \Hom(\Ker(v),A)$ so that $\psi = u \circ \vr$, which furnishes
$\Ker(v) \subset \mI(u)$. Consequently, the sequence
$$
A \overset{u}{\longrightarrow} B \overset{v}{\longrightarrow} C
$$
is exact.

Finally, let us show that $u$ is a strict morphism. Indeed, by taking
$D = \mI(u)$ endowed with the topology induced by that of $B$ and
$\psi\colon \mI(u) \to B$ the inclusion mapping, one has
$\psi \in \Ker(v^*,{\bf 1}) = \mI(u^*)$. Thus there is a
$\vr \in \Hom(\mI(u),A)$ so that $\psi = u \circ \vr$. Consequently,
$$
u(x) = \psi(u(x)) = u(\vr(u(x)))
$$
for all $x \in A$, and the injectivity of $u$ implies $\vr(u(x)) = x$
for all $x \in A$. Hence, if we view $u$ as a bijective morphism from
$A$ into $\mI(u)$, it follows that $u$ is a strict morphism.
\end{proof}

\begin{corollary}\label{Corollary3.2}
Let $A$ be a separable complete metrizable topological group,
$B$ a complete metrizable topological group, and $C$ an arbitrary
topological group. Let $u \in \Hom(A,B)$ and $v \in \Hom(B,C)$.
Then the following conditions are equivalent:

\begin{itemize}

\item[\rm (a)] $\mI(u)$ is closed and
  $$
  e \longrightarrow A \overset{u}{\longrightarrow} B \overset{v}{\longrightarrow} C
  $$
  is an exact sequence of group homomorphisms;

\item[\rm (b)] for each topological group $D$, the diagram
  $$
  \Hom(D,A) \overset{u^*}{\longrightarrow}\Hom(D,B)^{\overset{v^*}{\longrightarrow}}_{\underset{\bf 1}{\longrightarrow}} \Hom(D,C)
  $$
  of mappings is exact.

\end{itemize}
\end{corollary}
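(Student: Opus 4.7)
The plan is to reduce Corollary~\ref{Corollary3.2} to Theorem~\ref{Theorem3.1} by showing that, under the completeness and separability hypotheses, the condition ``$\mI(u)$ is closed in $B$'' is equivalent to ``$u$ is a strict morphism.'' Once this equivalence is in hand, the two implications of the corollary are immediate from Theorem~\ref{Theorem3.1}.

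For (a) $\Rightarrow$ (b), the first thing I would observe is that $\Ker(u)$ is closed in $A$, being the preimage of $\{e\}$ under a continuous map into the Hausdorff group $B$; hence $A/\Ker(u)$ inherits the structure of a separable complete metrizable topological group. By hypothesis $\mI(u)$ is closed in the complete metrizable group $B$, so $\mI(u)$ is itself complete metrizable; it is also separable as the continuous image of $A$. Thus the canonical continuous bijective morphism $\bar{u}\colon A/\Ker(u)\to \mI(u)$ is a continuous surjective homomorphism between Polish groups, and the open mapping theorem for Polish groups forces $\bar{u}$ to be open, hence a topological group isomorphism. Consequently $u$ is a strict morphism, and Theorem~\ref{Theorem3.1} yields (b).

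For (b) $\Rightarrow$ (a), Theorem~\ref{Theorem3.1} gives both that the algebraic sequence is exact and that $u$ is a strict morphism. Strictness identifies $\mI(u)$ with $A/\Ker(u)$ as topological groups; since $A$ is complete metrizable and $\Ker(u)$ is a closed normal subgroup, $A/\Ker(u)$ is complete metrizable. Therefore the subgroup $\mI(u)$ of $B$ carries a complete metric compatible with the topology induced from $B$. A subgroup of a Hausdorff topological group whose induced uniform structure is complete must be closed in the ambient group (any sequence in $\mI(u)$ converging in $B$ is Cauchy in $\mI(u)$, hence converges there, and limits in a Hausdorff space are unique), so $\mI(u)$ is closed in $B$.

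The main obstacle lies in the direction (a) $\Rightarrow$ (b): one must invoke the open mapping theorem for Polish groups to promote closedness of the image to strictness of $u$, and one must verify that the Polish hypotheses on $A$ transfer to $A/\Ker(u)$ and to $\mI(u)$. The reverse direction is essentially formal once the standard fact that a topologically complete subgroup is closed is accepted.
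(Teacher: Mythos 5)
Your proposal is correct and takes essentially the same route as the paper: it reduces the corollary to Theorem~\ref{Theorem3.1} by showing that, under the given completeness and separability hypotheses, closedness of $\mI(u)$ is equivalent to strictness of $u$ via Banach's open mapping theorem for metric groups. The paper compresses this into a single sentence citing Banach; you merely spell out the details (the transfer of the Polish hypotheses to $A/\Ker(u)$ and $\mI(u)$, and the closedness of a complete subgroup in the converse direction) that the paper leaves implicit.
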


\begin{proof}
Since every bijective morphism from a separable complete metrizable topological group into a complete metrizable topological group is a
strict morphism \cite{Banach}, the result follows immediately from
Theorem~\ref{Theorem3.1}.
\end{proof}

If $A$, $B$ are arbitrary groups, $\Hom_a(A,B)$ will denote the set of all
group homomorphisms from $A$ into $B$.

\begin{corollary}\label{Corollary3.3}
Let $A$, $B$, $C$ be groups and let $u\colon A \to B$, $v\colon B \to C$ be
group homomorphisms. Then the following conditions are equivalent:

\begin{itemize}

\item[\rm (a)] the sequence
  $$
  e \longrightarrow A \overset{u}{\longrightarrow} B \overset{v}{\longrightarrow} C
  $$
  of group homomorphisms is exact;

\item[\rm (b)] for each group $D$, the diagram
  $$
  \Hom_a(D,A) \overset{u^+}{\longrightarrow} \Hom_a(D,B)^{\overset{v^+}{\longrightarrow}}_{\underset{\bf 1}{\longrightarrow}} \Hom_a(D,C)
  $$
  of mappings is exact, where  $u^+(\vr) = u \circ \vr$ for $\vr \in \Hom_a(D,A)$,\, $v^+(\psi) = v \circ \psi$ for $\psi \in \Hom_a(D,B)$ and ${\bf 1}(\psi) = e$ for $\psi \in \Hom_a(D,B)$.

\end{itemize}
\end{corollary}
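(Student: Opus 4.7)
My plan is to deduce Corollary~\ref{Corollary3.3} from Theorem~\ref{Theorem3.1} by equipping each group with the discrete topology. The two key observations are: (i) when $X$ and $Y$ both carry the discrete topology, every group homomorphism $X \to Y$ is continuous, and hence $\Hom_a(X,Y) = \Hom(X,Y)$; (ii) every morphism $u\colon X \to Y$ between discrete topological groups is automatically a strict morphism, because $X/\Ker(u)$ (with the quotient topology) and $\mI(u)$ (with the subspace topology induced by that of $Y$) are both discrete, so the bijective morphism $\bar{u}$ is trivially a topological group isomorphism.

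For the implication (a) $\Rightarrow$ (b), I would regard $A$, $B$, $C$ as discrete topological groups and $u$, $v$ as morphisms between them. By observation (ii), $u$ is strict, so condition (a) of Theorem~\ref{Theorem3.1} holds. Theorem~\ref{Theorem3.1} then guarantees that, for every topological group $D$ (in particular, for each group $D$ endowed with the discrete topology), the diagram in Theorem~\ref{Theorem3.1}(b) is exact. Invoking observation (i) to identify $\Hom$ with $\Hom_a$, $u^*$ with $u^+$ and $v^*$ with $v^+$, this is precisely condition (b) of Corollary~\ref{Corollary3.3}.

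For (b) $\Rightarrow$ (a), I would again view $A$, $B$, $C$ as discrete. A glance at the proof of Theorem~\ref{Theorem3.1} shows that condition (b) is applied only to the test objects $D = \Ker(u)$, $D = A$, $D = \Ker(v)$ and $D = \mI(u)$, each endowed with the subspace topology inherited from a discrete group, which is itself discrete. By observation (i), condition (b) of Corollary~\ref{Corollary3.3} supplies the required exactness for these test objects, and the argument of Theorem~\ref{Theorem3.1} produces the exact sequence $e \to A \overset{u}{\to} B \overset{v}{\to} C$ (the strictness of $u$ that Theorem~\ref{Theorem3.1} additionally yields is vacuous here). The only subtlety --- and the closest thing to an obstacle --- is the verification that Theorem~\ref{Theorem3.1}(b), although quantified over all topological groups $D$, is in fact invoked in the present setting only on discrete test objects, so that the weaker quantification in Corollary~\ref{Corollary3.3}(b) still suffices to drive the argument.
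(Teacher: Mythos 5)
Your proof is correct, but it takes a genuinely different route from the paper's. The paper endows $A$, $B$, $C$ with the \emph{indiscrete} topology (writing $G_i$ for $G$ with this topology) rather than the discrete one. The payoff of that choice shows up in the direction (b) $\Rightarrow$ (a): since every map into an indiscrete group is continuous, $\Hom(D,G_i)=\Hom_a(D,G)$ for \emph{every} topological group $D$, not just for discrete ones, so hypothesis (b) of the corollary immediately yields the exactness of the diagram $\Hom(D,A_i)\to\Hom(D,B_i)\rightrightarrows\Hom(D,C_i)$ for all topological groups $D$, and Theorem~\ref{Theorem3.1} applies as a black box. With your discrete-topology choice, $\Hom_a(D,G)=\Hom(D_d,G_d)$ only identifies the corollary's hypothesis with exactness over \emph{discrete} test objects $D$, which is strictly weaker than what Theorem~\ref{Theorem3.1}(b) demands; you correctly repair this by reopening the proof of Theorem~\ref{Theorem3.1} and checking that the test objects actually used ($\Ker(u)\subset A$, $A$, $\Ker(v)\subset B$, $\mI(u)\subset B$, all with subspace topologies inherited from discrete groups) are themselves discrete. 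That check is valid, so your argument goes through, but it costs you the clean ``cite the theorem as stated'' structure that the indiscrete topology buys; on the other hand, your observation that strictness and continuity are automatic between discrete groups is just as immediate as the corresponding facts for indiscrete ones, so the (a) $\Rightarrow$ (b) direction is equally clean in both treatments.
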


\begin{proof}
In what follows a group $G$ endowed with the indiscrete topology will be
represented by $G_i$\,. Then
$$
\Hom_a(D,G) = \Hom(D,G_i)
$$
for every topological group $D$; in particular,
$$
\Hom_a(D,G) = \Hom(D_i,G_i)
$$
for every group $D$.

The topological groups $A_i, B_i$ and $C_i$ will be considered in the proof.

\smallskip

\noindent (a) $\Rightarrow$ (b):\, It is obvious that $u\colon A_i \to B_i$ is a strict morphism. Therefore, by
(a) $\Rightarrow$ (b) of Theorem~\ref{Theorem3.1}, the diagram
\begin{align*}
\Hom_a(D,A) = \Hom(D_i,A_i) \overset{u^+}{\longrightarrow} &\Hom_a(D,B) = \Hom(D_i,B_i)\\
{}^{\overset{v^+}{\longrightarrow}}_{\underset{\bf 1}{\longrightarrow}} &\Hom_a(D,C) = \Hom(D_i,C_i)
\end{align*}
is exact for every group $D$.

\smallskip

\noindent (b) $\Rightarrow$ (a):\, Let $D$ be an arbitrary topological group.
By hypothesis, the diagram
\begin{align*}
\Hom_a(D,A) = \Hom(D,A_i) \overset{u^*}{\longrightarrow} &\Hom_a(D,B) = \Hom(D,B_i)\\
{}^{\overset{v^*}{\longrightarrow}}_{\underset{\bf 1}{\longrightarrow}} &\Hom_a(D,C) = \Hom(D,C_i)
\end{align*}
is exact. Therefore, by (b) $\Rightarrow$ (a) of Theorem~\ref{Theorem3.1},
the sequence
$$
e \longrightarrow A \overset{u}{\longrightarrow} B \overset{v}{\longrightarrow} C
$$
is exact.
\end{proof}

Let $A$ be the additive group of real numbers endowed with the discrete
topology and $B$ the additive group of real numbers endowed with the usual
topology. Let $u \in \Hom(A,A)$ be given by $u(x) = e$ and let
$v \in \Hom(A,B)$ be given by $v(x) = x$. It is obvious that the sequence
$$
A \overset{u}{\longrightarrow} A \overset{v}{\longrightarrow} B \longrightarrow e
$$
of group homomorphisms is exact and that $v$ is not a strict morphism.
Nevertheless, the sequence
$$
\Hom(B,A) \overset{v_*}{\longrightarrow} \Hom(A,A)\overset{u_*}{\longrightarrow} \Hom(A,A)
$$
of group homomorphisms is not exact. For, if it were, the mapping
$x \in B \mapsto x \in A$ would be continuous, which is not the case.

The above-mentioned example shows that the strictness of $v$ is essential
for the validity of the implication (a) $\Rightarrow$ (b) in the following

\begin{theorem}\label{Theorem3.4}
Let $A$, $B$, $C$ be topological groups, $u \in \Hom(A,B)$ and
$v \in \Hom(B,C)$. Then the following conditions are equivalent:

\begin{itemize}

\item[\rm (a)] $v$ is a strict morphism and the sequence
  $$
  A \overset{u}{\longrightarrow} B \overset{v}{\longrightarrow} C \longrightarrow e
  $$
  of group homomorphisms is exact.

\item[\rm (b)] $\mI(u)$ is a normal subgroup of $B$,\, $\mI(v)$ is a normal
  subgroup of $C$ and, for each topological group $D$, the diagram
  $$
  \Hom(C,D)\overset{v_*}{\longrightarrow}\Hom(B,D)^{\overset{u_*}{\longrightarrow}}_{\underset{\bf 1}{\longrightarrow}} \Hom(A,D)
  $$
  of mappings is exact.

\end{itemize}
\end{theorem}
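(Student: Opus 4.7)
The strategy will mirror the proof of Theorem~\ref{Theorem3.1}, but dualised: where that proof tests the hypothesis on the sub-topological groups $\Ker(u)$ and $\mI(u)$, the present one will test on the quotient topological groups $C/\mI(v)$ and $B/\mI(u)$. The normality hypotheses in (b) are precisely what is needed to endow these quotients with a topological group structure.

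For (a) $\Rightarrow$ (b), normality of $\mI(u) = \Ker(v)$ in $B$ is automatic, and $\mI(v) = C$ is obviously normal in $C$. Injectivity of $v_*$ follows from the surjectivity of $v$, and the inclusion $\mI(v_*)\subset \Ker(u_*,\mathbf{1})$ is immediate from $v\circ u = e$. For the reverse inclusion, given $\chi\in \Hom(B,D)$ with $\chi\circ u = e$, the inclusion $\Ker(v) = \mI(u)\subset \Ker(\chi)$ implies that $\chi$ factors as a continuous group homomorphism through $B/\Ker(v)$ by the universal property of the quotient topology; strictness of $v$ together with its surjectivity supplies the topological isomorphism $\bar v\colon B/\Ker(v)\to C$, and composing the resulting factor with $\bar v^{-1}$ furnishes the required $\psi\in \Hom(C,D)$ with $\psi\circ v = \chi$.

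For (b) $\Rightarrow$ (a), I would extract the conclusions from three judicious choices of $D$. Choosing $D = C/\mI(v)$ (legal thanks to the normality hypothesis) and applying $v_*$ to the canonical projection $\pi\colon C\to C/\mI(v)$ yields $v_*(\pi) = e$; injectivity of $v_*$ then forces $\pi = e$, so $v$ is surjective. Choosing $D = C$ and observing that $v_*(1_C) = v$ places $v$ in $\Ker(u_*,\mathbf{1})$, hence $v\circ u = e$ and $\mI(u)\subset \Ker(v)$. Finally, with $D = B/\mI(u)$ and the canonical projection $\pi\colon B\to B/\mI(u)$, the identity $u_*(\pi) = e$ produces $\psi\in \Hom(C, B/\mI(u))$ with $\psi\circ v = \pi$; evaluating at $b\in \Ker(v)$ gives $\pi(b) = e$, so $\Ker(v)\subset \mI(u)$ and $B/\mI(u) = B/\Ker(v)$.

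The principal obstacle is to promote $v$ to a strict morphism in this last step. My plan is to identify the $\psi$ just constructed with the inverse of $\bar v\colon B/\Ker(v)\to C$. Indeed, $\bar v\circ \psi\circ v = \bar v\circ \pi = v$, and the surjectivity of $v$ forces $\bar v\circ \psi = 1_C$; symmetrically $\psi\circ \bar v\circ \pi = \psi\circ v = \pi$ together with the surjectivity of $\pi$ yields $\psi\circ \bar v = 1_{B/\Ker(v)}$. The continuity of $\psi$ then promotes $\bar v$ to a topological group isomorphism, which is exactly the definition of strictness for $v$.
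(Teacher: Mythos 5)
Your proposal is correct and follows essentially the same route as the paper's proof: the same three test objects $D = C/\mI(v)$, $D = C$, $D = B/\mI(u)$ in the direction (b) $\Rightarrow$ (a), and the same factorization of $\chi$ through $B/\Ker(v)$ via strictness of $v$ in the direction (a) $\Rightarrow$ (b), with the identification of $\psi$ as the inverse of $\bar v$ matching the paper's final step verbatim in substance. No gaps.
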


\begin{proof}
(a) $\Rightarrow$ (b):\, First of all, to say that the sequence
$$
A \overset{u}{\longrightarrow} B \overset{v}{\longrightarrow} C \longrightarrow e
$$
is exact is equivalent to saying that $\mI(u) = \Ker(v)$ and $v$ is
surjective; hence, in this case, $\mI(u)$ is a normal subgroup of $B$
and $\mI(v)$\,  is a normal subgroup of $C$.

Let $D$ be an arbitrary topological group, and let us show the exactness
of the diagram
$$
\Hom(C,D) \overset{v_*}{\longrightarrow} \Hom(B,D)^{\overset{u_*}{\longrightarrow}}_{\underset{\bf 1}{\longrightarrow}} \Hom(A,D).
$$

The injectivity of $v_*$ follows from the surjectivity of $v$.
Now let us prove that $\mI(v_*) = \Ker(u_*,{\bf 1})$.
Indeed, since $\mI(u) \subset \Ker(v)$, we get
$\mI(v_*) \subset \Ker(u_*,\bf 1)$. On the other hand, let
$w \in \Ker(u_*,\bf 1)$. Hence $w \circ u = e$\,, which implies
$$
\Ker(v) = \mI(u) \subset \Ker(w).
$$
Put $w'(v(y)) = w(y)$ for $y \in B$; $w'$ is well-defined and is a group
homomorphism from $C$ into $D$. Moreover, $w' \in \Hom(C,D)$ because $v$
is a strict morphism and $w$ is continuous. Finally,
$$
w = w' \circ v = v_*(w') \in \mI(v_*),
$$
and $\Ker(u_*,{\bf 1}) \subset \mI(v_*)$. Therefore
$\mI(v_*) = \Ker(u_*,\bf 1)$.

\smallskip

\noindent (b) $\Rightarrow$ (a):\, By hypothesis, the diagram
$$
\Hom(C,D) \overset{v_*}{\longrightarrow} \Hom(B,D){}^{\overset{u_*}{\longrightarrow}}_{\underset{\bf 1}{\longrightarrow}} \Hom(A,D)
$$
is exact for each topological group $D$. Then, by taking $D$ as the quotient
topological group $C/\mI(v)$, the injectivity of the mapping
$$
\Hom(C,C/\mI(v)) \overset{v_*}{\longrightarrow} \Hom(B, C/\mI(v))
$$
implies the exactness of the sequence
$$
B \overset{v}{\longrightarrow} C \longrightarrow e\,,
$$
that is, the surjectivity of $v$, because the canonical surjection
$\pi\colon C \to C/\mI(v)$ satisfies $v_*(\pi) = e$\,. Now, let us prove
the exactness of the sequence
$$
A \overset{u}{\longrightarrow} B \overset{v}{\longrightarrow} C.
$$
In fact, by taking $D = C$ we get
$$
e = (u_* \circ v_*)(1_C) = v \circ u,
$$
and hence $\mI(u) \subset \Ker(v)$. On the other hand, by taking $D$ as the
quotient topological group $B/\mI(u)$\,, we have that the canonical
surjection $\xi\colon B \to B/\mI(u)$ belongs to $\Ker(u_*,{\bf 1})$.
Thus there is a $\psi \in \Hom(C, B/\mI(u))$ such that
$\xi = v_*(\psi) = \psi \circ v$. Consequently,
$$
\xi(y) = \psi(v(y)) = \psi(e) = e
$$
for all $y \in \Ker(v)$, that is, $\Ker(v) \subset \Ker(\xi) = \mI(u)$.
Therefore $\mI(u) = \Ker(v)$. Finally, $\psi(v(y)) = \xi(y) = y\Ker(v)$
for all $y \in B$, that is, $\psi$ is the inverse of the bijective morphism
$y\Ker(v) \in B/\Ker(v) \mapsto v(y) \in C$; hence $v$ is a strict morphism.
\end{proof}

\begin{example}\label{Example3.5}
Let $E$ be an arbitrary Hausdorff locally convex space over $\mathbb{K}$
($\mathbb R$ or $\mathbb C$) and $M$ a closed subspace of $E$. If $E'$ is
the topological dual of $E$, a straightforward argument shows that
$E' = \Hom(E,\mathbb K)$. The vector space $E/M$, endowed with the quotient
topology, is a Hausdorff locally convex space over $\mathbb K$; moreover,
the canonical injection $i\colon M \to E$ and the canonical surjection
$\pi\colon E \to E/M$ are continuous linear mappings, $\pi$ being a strict
morphism. Since the sequence
$$
M \overset{i}{\longrightarrow} E \overset{\pi}{\longrightarrow} E/M \longrightarrow e
$$
is exact, Theorem~\ref{Theorem3.4} ensures the exactness of the sequence
$$
e \longrightarrow \big(E/M\big)' \overset{\pi^t}{\longrightarrow} E' \overset{i^t}{\longrightarrow} M'
$$
of linear mappings, where $i^t$ (resp. $\pi^t$) denotes the transpose of $i$
(resp. $\pi$). Therefore $\pi^t\colon \big(E/M\big)' \to {\rm Im}(\pi^t) =
{\rm Ker}(i^t)$ is a vector space isomorphism. On the other hand,
$$
\Ker(i^t) = \{\varphi \in E'; \varphi|M = e\} = M^{\bot},
$$
where $M^\bot$ is the orthogonal of $M$ with respect to the canonical dual
pair $(E,E')$. Finally, if we endow $\big(E/M\big)'$ with the weak topology
$\sigma\big(\big(E/M\big)', E/M\big)$ and $M^\bot$ with the topology induced
by the weak topology $\sigma(E',E)$, it is not hard to see that $\pi^t$ is a
locally convex space isomorphism.
\end{example}

\begin{corollary}\label{Corollary3.6}
Let $A$ be an arbitrary topological group, $B$ a 
separable complete metrizable topological group, and $C$ a complete metrizable topological
group. Let $u \in \Hom(A,B)$ and $v \in \Hom(B,C)$. Then the following
conditions are equivalent:

\begin{itemize}

\item[\rm (a)] the sequence
  $$
  A \overset{u}{\longrightarrow} B \overset{v}{\longrightarrow} C \longrightarrow e
  $$
  of group homomorphisms is exact;

\item[\rm (b)] $\mI(u)$ is a normal subgroup of $B$,\, $\mI(v)$ is a normal
  subgroup of $C$ and, for each topological group $D$, the diagram
  $$
  \Hom(C,D) \overset{v_*}{\longrightarrow} \Hom(B,D){}^{\overset{u_*}{\longrightarrow}}_{\underset{\bf 1}{\longrightarrow}} \Hom(A,D)
  $$
  of mappings is exact.

\end{itemize}
\end{corollary}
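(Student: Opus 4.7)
My plan is to reduce this corollary to Theorem~\ref{Theorem3.4}, exactly in the spirit of how Corollary~\ref{Corollary3.2} was deduced from Theorem~\ref{Theorem3.1}. The only gap between the hypotheses of Corollary~\ref{Corollary3.6} and those of Theorem~\ref{Theorem3.4} is the strictness of $v$, and this gap will be closed via Banach's open mapping theorem for topological groups \cite{Banach}: a continuous surjective homomorphism from a separable complete metrizable topological group onto a complete metrizable topological group is open, hence a strict morphism.

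For the implication (a) $\Rightarrow$ (b), I would first extract the purely algebraic content of exactness of the sequence $A \overset{u}{\longrightarrow} B \overset{v}{\longrightarrow} C \longrightarrow e$: namely, $\mI(u) = \Ker(v)$, so $\mI(u)$ is automatically a normal subgroup of $B$, and $v$ is surjective, so $\mI(v) = C$ is trivially a normal subgroup of $C$. To apply Theorem~\ref{Theorem3.4}(a) I then need $v$ to be a strict morphism. This is precisely where the metrizability hypotheses on $B$ and $C$ come in: since $v \in \Hom(B,C)$ is continuous and surjective with $B$ separable complete metrizable and $C$ complete metrizable, the topological-group version of Banach's theorem cited above makes $v$ strict. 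Theorem~\ref{Theorem3.4}(a) $\Rightarrow$ (b) then delivers the exactness of the diagram for every topological group $D$.

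For the converse (b) $\Rightarrow$ (a), no metrizability is needed: condition (b) of the corollary is verbatim condition (b) of Theorem~\ref{Theorem3.4}, and the implication (b) $\Rightarrow$ (a) of that theorem already yields the exactness of $A \overset{u}{\longrightarrow} B \overset{v}{\longrightarrow} C \longrightarrow e$ (in fact it gives the strictness of $v$ as a bonus, which is not demanded in the corollary's statement).

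I do not expect a genuine obstacle here; the proof is a one-line reduction to Theorem~\ref{Theorem3.4}, with the single substantive step being the invocation of the open mapping theorem to upgrade ``$v$ surjective'' to ``$v$ strict'' under the completeness and separability assumptions. The only thing to double-check is that the hypothesis ``$B$ separable complete metrizable, $C$ complete metrizable'' is indeed the form in which Banach's theorem is being used in \cite{Banach} (it matches exactly the form used for the bijective case in Corollary~\ref{Corollary3.2}).
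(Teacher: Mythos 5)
Your proposal is correct and follows essentially the same route as the paper: the paper's proof is a one-line reduction to Theorem~\ref{Theorem3.4} ``analogous to that of Corollary~\ref{Corollary3.2}'', i.e.\ invoking Banach's theorem to upgrade the surjective morphism $v$ to a strict morphism under the separability/completeness/metrizability hypotheses. Your additional observation that the converse direction needs no metrizability at all matches the structure of the paper's argument.
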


\begin{proof}
Analogous to that of Corollary~\ref{Corollary3.2}, by applying
Theorem~\ref{Theorem3.4} in place of Theorem~\ref{Theorem3.1}.
\end{proof}

\begin{corollary}\label{Corollary3.7}
Let $A$, $B$, $C$ be groups, and let $u\colon A \to B$ and $v\colon B \to C$
be group homomorphisms. Then the following conditions are equivalent:

\begin{itemize}

\item[\rm (a)] the sequence
  $$
  A \overset{u}{\longrightarrow} B \overset{v}{\longrightarrow} C \longrightarrow e
  $$
  of group homomorphisms is exact;

\item[\rm (b)] $\mI(u)$ is a normal subgroup of $B$,\, $\mI(v)$ is a normal
  subgroup of $C$ and, for each group $D$, the diagram
  $$
  \Hom_a(C,D) \overset{v_+}{\longrightarrow} \Hom_a(B,D){}^{\overset{u_+}{\longrightarrow}}_{\underset{\bf 1}{\longrightarrow}} \Hom_a(A,D)
  $$
  of mappings is exact, where  $v_+(\vr) = \vr \circ v$ for
  $\vr \in \Hom_a(C,D)$, $u_+(\psi) = \psi\circ u$ for $\psi \in \Hom_a(B,D)$
  and ${\bf 1}(\psi)=e$ for $\psi \in \Hom_a(B,D)$.

\end{itemize}
\end{corollary}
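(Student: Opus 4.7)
The idea is to mirror the proof of Corollary~\ref{Corollary3.3}, replacing the indiscrete-topology device (well suited to Theorem~\ref{Theorem3.1}, where $D$ occupies the source slot of the $\Hom$ bifunctor) by its dual, the \emph{discrete}-topology device, which is the appropriate one for Theorem~\ref{Theorem3.4}, where $D$ sits in the target slot.

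Write $G_d$ for the group $G$ endowed with the discrete topology. Two elementary observations will carry the entire argument. First, for every group $G$ and every topological group $D$ one has
$$
\Hom_a(G,D) = \Hom(G_d,D),
$$
because every map out of a discrete space is continuous. Second, every morphism $w\colon G_d \to H_d$ between two discrete topological groups is automatically a strict morphism: the quotient topology on $G_d/\Ker(w)$ and the topology induced from $H_d$ on $\mI(w)$ are both discrete, so the canonical bijective morphism between them is a topological group isomorphism.

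The proof will then consist in applying Theorem~\ref{Theorem3.4} to $u\colon A_d \to B_d$ and $v\colon B_d \to C_d$. For (a) $\Rightarrow$ (b), the exactness of the given sequence makes $\mI(u)=\Ker(v)$ normal in $B$ and $\mI(v)=C$ normal in $C$, while the second observation supplies the strictness of $v\colon B_d \to C_d$; condition (a) of Theorem~\ref{Theorem3.4} therefore holds for the discrete data, and its conclusion yields the exactness of $\Hom(C_d,D)\to\Hom(B_d,D)\to\Hom(A_d,D)$ for every topological group $D$, which rewrites via the key identity as the diagram in (b). For (b) $\Rightarrow$ (a), given an arbitrary topological group $D$, applying hypothesis~(b) to its underlying group yields the exactness of $\Hom_a(C,D)\to\Hom_a(B,D)\to\Hom_a(A,D)$; the key identity turns this into condition (b) of Theorem~\ref{Theorem3.4} for the discrete data (the normality assumptions transferring verbatim), whose conclusion contains the exactness of $A_d\to B_d\to C_d\to e$, that is, of $A\to B\to C\to e$. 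I do not foresee any genuine obstacle here: the argument is entirely formal, and the only items that warrant a moment's verification are the two observations above.
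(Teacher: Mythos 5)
Your proposal is correct and follows essentially the same route as the paper: both endow $A$, $B$, $C$ with the discrete topology, use the identification $\Hom_a(G,D)=\Hom(G_d,D)$ together with the automatic strictness of morphisms between discrete groups, and then invoke the two implications of Theorem~\ref{Theorem3.4}.
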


\begin{proof}
In what follows a group $G$ endowed with the discrete topology will be
represented by $G_d$\,. Then
$$
\Hom_a(G,D) = \Hom(G_d,D)
$$
for every topological group $D$; in particular,
$$
\Hom_a(G,D) = \Hom(G_d,D_d)
$$
for every group $D$.

The topological groups $A_d, B_d$ and $C_d$ will be considered in the proof.

\smallskip

\noindent (a) $\Rightarrow$ (b):\, It is obvious that  $v\colon B_d \to C_d$ is a strict morphism. Therefore, by
(a) $\Rightarrow$ (b) of Theorem~\ref{Theorem3.4}, $\mI(u)$ (resp. $\mI(v)$)
is a normal of $B$ (resp. $C)$ and the diagram
\begin{align*}
\Hom_a(A,D) = \Hom(A_d,D_d) \overset{v_+}{\longrightarrow} &\Hom_a(B,D) = \Hom(B_d,D_d)\\
{}^{\overset{u_+}{\longrightarrow}}_{\underset{\bf 1}{\longrightarrow}} &\Hom_a(C,D) = \Hom(C_d,D_d)
\end{align*}
is exact for every group $D$.

\smallskip

\noindent (b) $\Rightarrow$ (a):\, Let $D$ be an arbitrary topological group.
By hypothesis, the diagram
\begin{align*}
\Hom_a(C,D) = \Hom(C_d,D) \overset{v_*}{\longrightarrow} &\Hom_a(B,D) = \Hom(B_d,D)\\
{}^{\overset{u_*}{\longrightarrow}}_{\underset{\bf 1}{\longrightarrow}} &\Hom_a(A,D) = \Hom(A_d,D)
\end{align*}
is exact. Therefore, by (b) $\Rightarrow$ (a) of Theorem~\ref{Theorem3.4},
the sequence
$$
A \overset{u}{\longrightarrow} B \overset{v}{\longrightarrow} C \longrightarrow e
$$
is exact.
\end{proof}


\end{document}